\newtheoremstyle{mytheorem}%
{5pt}%
{3pt}%
{\itshape}%
{1pt}%
{\bf}%
{.}%
{.5em}%
{}%
\newtheoremstyle{myremark}%
{5pt}%
{3pt}%
{\upshape}%
{1pt}%
{\em}%
{.}%
{.5em}%
{}%
\newtheoremstyle{myexample}%
{5pt}%
{3pt}%
{\upshape}%
{1pt}%
{\bf}%
{.}%
{.5em}%
{}%
\theoremstyle{mytheorem}
\newtheorem{theorem}{Theorem}[section]
\newtheorem{lemma}[theorem]{Lemma}
\newtheorem{proposition}[theorem]{Proposition}
\newtheorem{corollary}[theorem]{Corollary}
\theoremstyle{myremark}
\newtheorem{remark}[theorem]{Remark}
\theoremstyle{myexample}
\numberwithin{equation}{section}
\makeatletter \renewenvironment{proof}[1][\proofname] {\par\pushQED{\qed}\normalfont\topsep6\p@\@plus6\p@\relax\trivlist\item[\hskip\labelsep\itshape #1\@addpunct{.}]\ignorespaces}{\popQED\endtrivlist\@endpefalse} \makeatother
\renewcommand{\phi}{\varphi}
\renewcommand{\theta}{\vartheta}
\renewcommand{\epsilon}{\varepsilon}
\DeclareMathOperator{\Sp}{Sp}
\newcommand{\pair}[2]{\left(\begin{array}{c}\!\!#1\!\!\\ \!\!#2\!\!\end{array}\right)}
\newcommand{\operator}[2]{\left(\begin{array}{cc}\!\! I &  #1\!\!\\ \!\! #2& I\!\!\end{array}\right)}
\DeclareMathOperator{\sform}{\mathfrak{s}}
\DeclareMathOperator{\tform}{\mathfrak{t}}
\DeclareMathOperator{\aform}{\mathfrak{a}}
\DeclareMathOperator{\bform}{\mathfrak{b}}
\newcommand{\dupN}{\mathbb{N}}
\newcommand{\seq}[1]{(#1_{n})_{n\in\dupN}}
\newcommand{\dupR}{\mathbb{R}}
\newcommand{\dom}{\operatorname{dom}}
\newcommand{\ran}{\operatorname{ran}}
\newcommand{\hil}{\mathscr{H}}
\newcommand{\kil}{\mathscr{K}}
\DeclarePairedDelimiterX\sip[2]{(}{)}{#1\,\delimsize\vert\,#2}
\DeclarePairedDelimiterX\siptilde[2]{(}{)_{\!_{\widetilde{A}}}}{#1\,\delimsize\vert\,#2}
\DeclarePairedDelimiterX\sipn[2]{(}{)_{\nu}}{#1\,\delimsize\vert\,#2}
\DeclarePairedDelimiterX\sipm[2]{(}{)_{\mu}}{#1\,\delimsize\vert\,#2}
\DeclarePairedDelimiterX\sips[2]{(}{)_{\sform}}{#1\,\delimsize\vert\,#2}
\DeclarePairedDelimiterX\sipt[2]{(}{)_{\tform}}{#1\,\delimsize\vert\,#2}
\DeclarePairedDelimiterX\set[2]{\{}{\}}{#1\,\delimsize\vert\,#2}
\DeclarePairedDelimiterX\dual[2]{\langle}{\rangle}{#1,#2}
\DeclarePairedDelimiterX\sipa[2]{(}{)_{\aform}}{#1\,\delimsize\vert\,#2}
\DeclarePairedDelimiterX\sipb[2]{(}{)_{\bform}}{#1\,\delimsize\vert\,#2}
\DeclarePairedDelimiterX\sipv[2]{(}{)_{v}}{#1\,\delimsize\vert\,#2}
\DeclarePairedDelimiterX\sipw[2]{(}{)_{w}}{#1\,\delimsize\vert\,#2}
\newcommand{\limn}{\lim\limits_{n\rightarrow\infty}}
\begin{document}
\title[On the sum between a closable operator...]{On the sum between a closable operator $T$ and a $T$-bounded operator}

\author[D. Popovici]{Dan Popovici}
\address{D. Popovici, Department of Mathematics, West University of Timi\c{s}oara, Bd. Vasile P\^{a}rvan nr. 4,
RO-300223 Timi\c{s}oara, Romania;}
\email{popovici@math.uvt.ro}

\author[Z. Sebesty\'en]{Zolt\'an Sebesty\'en}
\address{Z. Sebesty\'en, Department of Applied Analysis, E\"otv\"os L. University, P\'azm\'any P\'eter s\'et\'any 1/c., Budapest H-1117, Hungary; }
\email{sebesty@cs.elte.hu}

\author[Zs. Tarcsay]{Zsigmond Tarcsay}
\address{Zs. Tarcsay, Department of Applied Analysis, E\"otv\"os L. University, P\'azm\'any P\'eter s\'et\'any 1/c., Budapest H-1117, Hungary; }
\email{tarcsay@cs.elte.hu}

\keywords{Perturbation, $T$-boundedness, unbounded, closable, closed, symmetric, selfadjoint, essentially selfadjoint operator}
\subjclass[2010]{Primary  47A55, 47B25}

\begin{abstract}
We provide several perturbation theorems regarding closable operators on a real or complex Hilbert space. In particular we extend some classical results due to Hess--Kato, Kato--Rellich and W\"ust. Our
approach involves  ranges of matrix operators of the form $\operator{A}{-B}$.
\end{abstract}

\maketitle

\section{Introduction}

In the present paper we develop a perturbation theory of closable operators between Hilbert spaces. Operators we consider are (unless it is otherwise indicated) not necessarily densely defined
linear transformations and the Hilbert spaces are allowed to be either real or complex. The domain, kernel
and range of  an operator $A$ is denoted by $\dom A$, $\ker A$ and $\ran A$, respectively. The scalar product of each Hilbert space we encounter is denoted by the same symbol $\sip{\cdot}{\cdot}$
in the hope that we do not cause any confusion. As usual, $A^*$ stands for the adjoint operator of a densely defined operator $A$. A not necessarily densely defined operator $S$ is said to 
be symmetric if it satisfies
\begin{equation*}
    \sip{Sx}{y}=\sip{x}{Sy}, \qquad x,y\in \dom S,
\end{equation*}
and skew-symmetric if
\begin{equation*}
    \sip{Sx}{y}=-\sip{x}{Sy}, \qquad x,y\in \dom S.
\end{equation*}
For a densely defined $S$ the above relations mean that $S\subset S^*$ and $S\subset-S^*$, respectively. $S$ is called selfadjoint (resp., skew-adjoint) if $S=S^*$ (resp., $S=-S^*$).

If $S$ and $T$ are arbitrary operators with $\dom S\subseteq\dom T$ and such that there exist  $a,b\geq0$ satisfying
\begin{equation}\label{E:Tbounded}
    \|Sx\|^2\leq a\|Tx\|^2+b\|x\|^2,\qquad x\in\dom T,
\end{equation}
then $S$ is called $T$-bounded. The $T$-bound of $S$ in that case is defined to be the infimum of all nonnegative numbers $a$ for which a $b\geq0$ exists such that $\eqref{E:Tbounded}$ satisfies.
The notion of $T$-boundedness  is a useful tool  of the classical perturbation theory. For example, it appears as a basic condition of the  Hess--Kato \cite{HessKato}, Kato--Rellich \cite{rellich}
and W\"ust \cite{Wüst} perturbation theorems. The main goal of this note is to provide similar perturbation results under weaker conditions (see Theorem \ref{T:theorem12}, Corollary \ref{C:corollary18} and Corollary
\ref{C:corollary19} below). Our results involve ranges of $2$-by-$2$ matrix operators of the form $\operator{A}{-B}$. In our most recent works \cite{PoSe1,PoSe2} the
conditions provided by these matrices are replaced by similar conditions involving the operators $I+AB$ and $I+BA$.

\section{Closability of operators}

We start our discussion with an easy but useful result which in fact is a part of \cite[Lemma 3.1]{Popovici}. We present also its proof for the sake of the reader.
\begin{lemma}\label{L:lemma1}
    Let $S$ and $T$ be (possibly unbounded) linear operators between the Hilbert spaces $\hil$ and $\kil$, resp., $\kil$ and $\hil$. If
    \begin{enumerate}[\upshape (a)]
      \item $\ran S=\kil$,
      \item $\overline{\ran T}=\hil$,
      \item $\sip{Sh}{k}=\sip{h}{Tk}$, $h\in\dom S, k\in\dom T$,
    \end{enumerate}
    then $T$ is automatically densely defined such that $T^*=S$.
\end{lemma}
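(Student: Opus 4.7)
The plan is to first establish density of $\dom T$ using conditions (a) and (c), and then to show $S=T^*$ via a two-sided inclusion using condition (b).

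First I would prove that $T$ is densely defined. Pick any $k_0\in\kil$ orthogonal to $\dom T$. By (a) there is $h\in\dom S$ with $Sh=k_0$. Then (c) gives, for every $k\in\dom T$,
\begin{equation*}
    (h|Tk) = (Sh|k) = (k_0|k) = 0,
\end{equation*}
so $h\perp\ran T$. By (b) this forces $h=0$, whence $k_0=Sh=0$. Thus $\dom T$ is dense and $T^{*}$ is well defined.

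Next I would check $S\subseteq T^{*}$. Condition (c) says that for each $h\in\dom S$ the linear functional $k\mapsto(h|Tk)$ on $\dom T$ equals $k\mapsto(Sh|k)$, which is bounded. By the definition of the adjoint, $h\in\dom T^{*}$ and $T^{*}h=Sh$, i.e.\ $S\subseteq T^{*}$.

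For the reverse inclusion $\dom T^{*}\subseteq\dom S$, I would use the surjectivity of $S$ together with the injectivity of $T^{*}$. Given $h\in\dom T^{*}$, the element $T^{*}h\in\kil$ lies in $\ran S$ by (a), so there exists $h'\in\dom S$ with $Sh'=T^{*}h$. Since $S\subseteq T^{*}$ we also have $T^{*}h'=Sh'=T^{*}h$, hence $h-h'\in\ker T^{*}=(\ran T)^{\perp}=(\overline{\ran T})^{\perp}$. By (b) this orthogonal complement is $\{0\}$, so $h=h'\in\dom S$ and $Sh=T^{*}h$. Combined with the previous inclusion this yields $T^{*}=S$.

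No step looks like a serious obstacle; the only point that requires care is to remember that $S\subseteq T^{*}$ must be established before one can exploit $\ker T^{*}=\{0\}$ in the surjectivity argument, and that condition (b) is used twice: once to get density of $\dom T$ and once to kill $\ker T^{*}$.
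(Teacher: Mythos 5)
Your proof is correct and follows essentially the same argument as the paper: density of $\dom T$ via surjectivity of $S$ together with (b) and (c), the obvious inclusion $S\subseteq T^{*}$, and then the reverse inclusion by solving $Sh'=T^{*}h$ and using $\ker T^{*}=(\ran T)^{\perp}=\{0\}$. No gaps; your remark that $S\subseteq T^{*}$ must precede the use of $\ker T^{*}=\{0\}$ matches the structure of the paper's proof.
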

\begin{proof}
     In order to prove that $T$ is densely defined, let $v\in\dom T^{\perp}$. Consider $h\in\dom S$ such that $Sh=v$. As, for any $k\in\dom T$
     \begin{equation*}
        0=\sip{v}{k}=\sip{Sh}{k}=\sip{h}{Tk},
     \end{equation*}
     we deduce that $h\in\ran T^{\perp}=\{0\}$. Hence $v=Sh=0$, as required. It is obvious by (c) that $S\subset T^*$. For the converse inclusion we should show that $\dom T^*\subseteq\dom S$. With this aim let us take $k\in\dom T^*$ and $u\in\dom S$ such that $Su=T^*k$. As $S\subset T^*$, we have $T^*k=T^*u$ and hence
     \begin{equation*}
        k=k-u+u\in\ker T^*+\dom S\subseteq \dom S.
     \end{equation*}
     Here, for the last equality we used the fact that $\ker T^*=\ran T^{\perp}=\{0\}$.
\end{proof}
\begin{theorem}\label{T:theorem2}
    Let $\hil_1, \hil_2, \hil_3$ and $\hil_4$ be Hilbert spaces and consider the (not necessarily densely defined) linear operators $A:\hil_1\to\hil_4$, $B:\hil_2\to\hil_3$, $C:\hil_3\to\hil_2$ and $D:\hil_4\to\hil_1$. If
    \begin{enumerate}[\upshape (a)]
      \item $\ran\operator{A}{-B}=\hil_4\times\hil_3$,
      \item $\overline{\ran\operator{-C}{D}}=\hil_2\times\hil_1$,
      \item $\sip{Ax_1}{x_4}-\sip{x_1}{Dx_4}=\sip{Bx_2}{x_3}-\sip{x_2}{Cx_3}(=0)$, $x_1\in\dom A, x_2\in\dom B, x_3\in\dom C, x_4\in\dom D,$
    \end{enumerate}
    then $C$ and $D$ are densely defined such that $D^*=A$ and $C^*=B$.
\end{theorem}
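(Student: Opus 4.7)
The plan is to apply Lemma \ref{L:lemma1} not to the individual operators $A,B,C,D$ but to the two block matrices themselves, viewed as single unbounded operators between product Hilbert spaces. Put $\mathbf{S}:=\operator{A}{-B}$, understood as $(x_2,x_1)\mapsto (x_2+Ax_1,\,-Bx_2+x_1)$ with domain $\dom B\times \dom A\subseteq \hil_2\times \hil_1$ and values in $\hil_4\times \hil_3$, and set $\mathbf{T}:=\operator{-C}{D}$ analogously, with domain $\dom D\times \dom C\subseteq \hil_4\times \hil_3$ and values in $\hil_2\times \hil_1$. Hypotheses (a) and (b) of the theorem are then precisely hypotheses (a) and (b) of Lemma \ref{L:lemma1} for the pair $(\mathbf{S},\mathbf{T})$.

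The pairing condition of Lemma \ref{L:lemma1}(c) for $(\mathbf{S},\mathbf{T})$ reads
\[
\sip{\mathbf{S}(x_2,x_1)}{(x_4,x_3)}=\sip{(x_2,x_1)}{\mathbf{T}(x_4,x_3)}.
\]
Expanding by bilinearity, the contributions $\sip{x_2}{x_4}$ and $\sip{x_1}{x_3}$ coming from the diagonal identities cancel on the two sides, and one is left with the identity
\[
\sip{Ax_1}{x_4}-\sip{x_1}{Dx_4}=\sip{Bx_2}{x_3}-\sip{x_2}{Cx_3},
\]
which is a direct consequence of hypothesis (c). Lemma \ref{L:lemma1} therefore applies and yields that $\mathbf{T}$ is densely defined with $\mathbf{T}^*=\mathbf{S}$. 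Density of $\dom\mathbf{T}=\dom D\times \dom C$ in the product space immediately forces density of $\dom D$ in $\hil_4$ and of $\dom C$ in $\hil_3$, so the individual adjoints $D^*$ and $C^*$ make sense.

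It remains to decipher the identity $\mathbf{T}^*=\mathbf{S}$ componentwise. For this I would compute $\mathbf{T}^*$ directly from its definition: testing the defining identity of the adjoint first on vectors of the form $(u_4,0)\in\dom\mathbf{T}$ and then on $(0,u_3)\in\dom\mathbf{T}$ decouples the two rows of the matrix and shows that $\dom\mathbf{T}^*=\dom C^*\times \dom D^*$, with $\mathbf{T}^*$ acting as the block matrix $\operator{D^*}{-C^*}$. Comparing this with $\mathbf{S}=\operator{A}{-B}$ on $\dom B\times \dom A$, and using that a Cartesian product of nonempty linear subspaces is determined by its factors, one reads off $\dom A=\dom D^*$, $\dom B=\dom C^*$, together with $A=D^*$ and $B=C^*$, as required. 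The main obstacle is precisely this matrix-adjoint computation: once the product structure of $\dom\mathbf{T}^*$ is established, passing from the combined identity $\mathbf{T}^*=\mathbf{S}$ to the individual equalities is purely formal.
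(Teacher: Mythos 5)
Your proposal is correct and follows essentially the same route as the paper: apply Lemma \ref{L:lemma1} to the block operators $\operator{A}{-B}$ and $\operator{-C}{D}$ on the product spaces, check that hypothesis (c) is exactly the pairing condition after the diagonal terms cancel, and then read off $D^*=A$, $C^*=B$ from the identity $\operator{-C}{D}^*=\operator{D^*}{-C^*}$. The only difference is that the paper invokes this block-adjoint identity without proof, whereas you sketch its verification by testing on $(u_4,0)$ and $(0,u_3)$ — a harmless elaboration of the same argument.
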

\begin{proof}
    We may use Lemma \ref{L:lemma1} for the Hilbert spaces $\hil=\hil_2\times\hil_1$, $\kil=\hil_4\times\hil_3$ and the operators
    \begin{equation*}
        S=\operator{A}{-B},\qquad T=\operator{-C}{D}.
    \end{equation*}
    We firstly observe that
    \begin{align*}
        \sip[\bigg]{\operator{A}{-B}\pair{x_2}{x_1}}{\pair{x_4}{x_3}}&=\sip{x_2+Ax_1}{x_4}+\sip{-Bx_2+x_1}{x_3}\\
        &=\sip{x_2}{x_4}+\sip{Ax_1}{x_4}-\sip{Bx_2}{x_3}+\sip{x_1}{x_3}
    \end{align*}
    and that
    \begin{align*}
        \sip[\bigg]{\pair{x_2}{x_1}}{\operator{-C}{D}\pair{x_4}{x_3}}&=\sip{x_2}{x_4-Cx_3}+\sip{x_1}{Dx_4+x_3}\\
        &=\sip{x_2}{x_4}+\sip{x_1}{Dx_4}-\sip{x_2}{Cx_3}+\sip{x_1}{x_3}.
    \end{align*}
    The two quantities are equal if and only if (c) holds true. We deduce therefore by Lemma \ref{L:lemma1} that
    \begin{equation*}
        \overline{\dom \operator{-C}{D}}=\hil_4\times\hil_3\qquad\textrm{and}\qquad \operator{-C}{D}^*=\operator{A}{-B}.
    \end{equation*}
    As $\dom T=\dom D\times \dom C$, we conclude that $C$ and $D$ are densely defined operators. In addition, since
    \begin{equation*}
        \operator{-C}{D}^*=\operator{D^*}{-C^*},
    \end{equation*}
    it follows that $D^*=A$ and $C^*=B$.
\end{proof}
\begin{remark}
    We mention here the elementary fact that for given two operators $S$ and $T$, $\operator{S}{-T}$ has full
(resp., dense) range if and only if $\operator{-S}{T}$ has full (dense) range. The proof is left to the reader.
\end{remark}
\begin{theorem}\label{T:theorem4}
     Let $A$ and $B$ be (possibly unbounded) linear operators between the Hilbert spaces $\hil$ and $\kil$, resp., $\kil$ and $\hil$. The following assertions are equivalent:
     \begin{enumerate}[\upshape (i)]
       \item $B$ is a densely defined closable operator such that $B^*=A$;
       \item $A$ is a densely defined closed operator such that
       \begin{equation*}
       \sip{Ax}{y}=\sip{x}{By}, \qquad x\in\dom A, y\in\dom B,
       \end{equation*}
 and that $\overline{\ran\operator{A}{-B}}=\kil\times\hil$.
     \end{enumerate}
\end{theorem}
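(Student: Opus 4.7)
The plan is to treat the two directions separately, in both cases using the $2\times 2$ matrix operator calculus that already appears in the proof of Theorem~\ref{T:theorem2}, and for the more subtle implication invoking Lemma~\ref{L:lemma1}.

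For $(i)\Rightarrow(ii)$: since $B$ is densely defined and closable, $A=B^*$ is automatically closed and densely defined, and the inner product identity is the very definition of $A=B^*$. To obtain $\overline{\ran\operator{A}{-B}}=\kil\times\hil$, I would note that $\operator{A}{-B}$ is densely defined and compute its adjoint by the standard pairing, yielding
\begin{equation*}
\operator{A}{-B}^*=\operator{-B^*}{A^*}=\operator{-A}{A^*}.
\end{equation*}
Density of the range is then equivalent to triviality of $\ker\operator{-A}{A^*}$. Any $(k,h)$ in this kernel satisfies $k=Ah$ and $A^*k+h=0$, so $(I+A^*A)h=0$; since $I+A^*A$ is injective by positivity, $h=0$, whence $k=0$.

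For $(ii)\Rightarrow(i)$: the inner product identity yields $B\subset A^*$. The plan is to apply Lemma~\ref{L:lemma1} with $T:=\operator{A}{-B}$, whose domain is $\dom B\times\dom A$, and $S:=\operator{-A}{A^*}$, whose domain is $\dom A^*\times\dom A$, both regarded as operators on the Hilbert space $\kil\times\hil$. Dense range of $T$ is the standing hypothesis. The inner product condition $\sip{Sh}{k}=\sip{h}{Tk}$ is checked by a short direct computation in which the two pairs of cross terms cancel, using $B=A^*$ on $\dom B$ together with the standard adjoint identity $\sip{Ax}{y}=\sip{x}{A^*y}$ valid for $x\in\dom A$ and $y\in\dom A^*$. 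The main step is the full-range claim for $S$: given $(v,w)\in\kil\times\hil$, I decompose $(v,w)=(v,0)+(0,w)$ and solve the two resulting subsystems by invoking the classical fact that $I+AA^*$ and $I+A^*A$ are self-adjoint with everywhere-defined bounded inverses.

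Lemma~\ref{L:lemma1} then gives that $T$ is densely defined---equivalently, $\dom B$ is dense in $\kil$---and that $T^*=S$. Comparing with the general adjoint formula $\operator{A}{-B}^*=\operator{-B^*}{A^*}$ yields $B^*=A$, and closability of $B$ follows because its adjoint $A$ is densely defined. The principal technical point is the full-range verification for $S$, which rests on von~Neumann's theorem on the invertibility of $I+A^*A$ and $I+AA^*$; once it is in place the rest is straightforward bookkeeping in the matrix calculus.
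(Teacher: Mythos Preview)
Your proposal is correct and follows essentially the same route as the paper. For $(i)\Rightarrow(ii)$ both arguments reduce the dense-range claim to triviality of the kernel of the adjoint matrix operator; the paper expands $\bigl\|\operator{-B^*}{B^{**}}\pair{y}{x}\bigr\|^2$ directly and bounds it below by $\|(y,x)\|^2$, whereas you invoke injectivity of $I+A^*A$---these are the same computation. For $(ii)\Rightarrow(i)$ the paper applies Theorem~\ref{T:theorem2} (with $A,A^*,A,B$ in the roles of $A,B,C,D$), while you apply Lemma~\ref{L:lemma1} directly to $S=\operator{-A}{A^*}$ and $T=\operator{A}{-B}$; since Theorem~\ref{T:theorem2} is nothing but Lemma~\ref{L:lemma1} specialized to matrix operators, this is only a packaging difference. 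Likewise, your surjectivity argument for $S$ via the invertibility of $I+AA^*$ and $I+A^*A$ is the standard equivalent of the von~Neumann graph decomposition the paper quotes.
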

\begin{proof}
    We firstly observe that $\operator{A}{-A^*}$ is surjective under assumption (ii). Indeed, as $A$ is densely defined and closed, we have by von Neumann's formulae
    \begin{align*}
        \kil\times \hil&=\set{(y,-A^*y)}{y\in\dom A^*}\oplus\set{(Ax,x)}{x\in\dom A}\\
                       &=\set{(y+Ax,x-A^*y)}{x\in\dom A, y\in\dom A^*}=\ran \operator{A}{-A^*}.
    \end{align*}
     Furthermore, by Theorem \ref{T:theorem2} we conclude that
    \begin{equation*}
        \overline{\dom B}=\hil\qquad \textrm{and}\qquad B^*=A.
    \end{equation*}
    Hence (ii) implies (i). Conversely, assume that $A,B$ fulfill (i). Obviously, $A$ is closed (being the adjoint of $B$) and densely defined (as $B$ is closable).
    Our only claim is therefore to show that  $\overline{\ran\operator{B^*}{-B}}=\kil\times\hil$ holds for any closable operator between $\kil$ and $\hil$. Equivalently,
    \begin{equation*}
        \ker\operator{-B^*}{B^{**}}=\{0\}.
    \end{equation*}
    Indeed, for if $x\in\dom B^{*}, y\in\dom B^{**}$ then
    \begin{gather*}
        \left\|\operator{-B^*}{B^{**}}\pair{y}{x}\right\|^2=\|y-B^*x\|^2+\|B^{**}y+x\|^2\\
        =\|y\|^2+\|B^*x\|^2+\|x\|^2+\|B^{**}y\|^2\\
        -\sip{B^*x}{y}-\sip{y}{B^*x}+\sip{B^{**}y}{x}+\sip{x}{B^{**}y}\\
        =\|y\|^2+\|B^*x\|^2+\|x\|^2+\|B^{**}y\|^2\\
        \geq \left\|\pair{y}{x}\right\|^2.
    \end{gather*}
    (Here we used the well known identity $B^*=(B^{**})^*$). Hence (i) implies (ii).
\end{proof}
\begin{remark}
    The surjectivity of the matrix operator $\operator{A}{-A^*}$
(for a given densely defined operator $A$) has been observed in \cite[Propostion 2.3 (b)]{Popovici} to be equivalent with the property of $A$
to be closed. In \cite[Propostion 2.3 (a)]{Popovici} it was shown, with a different proof, that $\operator{A}{-A^*}$ has dense range even without the
assumption on the closability of $A$.
\end{remark}
\begin{remark}
    In the previous result we obtain the same conclusion if we replace $I$ by $\alpha I$ for a certain/for any $\alpha \in\dupR\setminus\{0\}$.
\end{remark}
\begin{remark}
    We also mention, without proof, that for given   be densely defined linear operators $A, B$ between the Hilbert spaces $\hil$ and $\kil$, resp., $\kil$ and $\hil$, the conditions
    \begin{enumerate}[\upshape (i)]
      \item $\overline{\ran \operator{A}{B}}=\kil\times\hil,$
      \item $1\notin\Sp_p(B^*A^*)$,
      \item $1\notin\Sp_p(A^*B^*)$,
    \end{enumerate}
    are equivalent. This result is obtained in  \cite[Proposition 2.4]{Popovici}.
\end{remark}
Theorem \ref{T:theorem4} provides a useful characterization of essentially selfadjointness, cf. also \cite[Corollary 4.8]{Popovici}:
\begin{corollary}\label{C:corollary6}
    Let $S$ be a linear operator acting in the (real or complex) Hilbert space $\hil$. The following statements are equivalent:
    \begin{enumerate}[\upshape (i)]
      \item $S$ is essentially selfadjoint;
      \item $S$ is densely defined symmetric such that $\overline{\ran\operator{S}{-S}}=\hil\times\hil$.
    \end{enumerate}
\end{corollary}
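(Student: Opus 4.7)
The plan is to deduce each implication from Theorem \ref{T:theorem4} by a judicious choice of the pair $(A,B)$. Throughout one uses that a densely defined symmetric $S$ is automatically closable (since $S\subset S^*$), hence $S^{**}=\overline{S}$ is well defined and $S^{**}\subset S^*$.

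For (i) $\Rightarrow$ (ii): essential selfadjointness means $S^{**}=S^*$, so $S^{**}$ is selfadjoint. I would apply the direction (i) $\Rightarrow$ (ii) of Theorem \ref{T:theorem4} to $A=B=S^{**}$: since $S^{**}$ is closed (hence trivially closable) and $(S^{**})^*=S^{**}$, hypothesis (i) of Theorem \ref{T:theorem4} is in force, and we obtain
\begin{equation*}
    \overline{\ran\operator{S^{**}}{-S^{**}}}=\hil\times\hil.
\end{equation*}
A short approximation step then completes the argument: for $x,y\in\dom S^{**}$ choose sequences $x_n,y_n\in\dom S$ with $x_n\to x$, $Sx_n\to S^{**}x$ and $y_n\to y$, $Sy_n\to S^{**}y$; then $(y_n+Sx_n,x_n-Sy_n)$ lies in $\ran\operator{S}{-S}$ and converges to $(y+S^{**}x,x-S^{**}y)$, so $\ran\operator{S^{**}}{-S^{**}}\subseteq \overline{\ran\operator{S}{-S}}$ and the required density follows.

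For (ii) $\Rightarrow$ (i): here I would apply the direction (ii) $\Rightarrow$ (i) of Theorem \ref{T:theorem4} with $A=S^{**}$ and $B=S$. The three hypotheses to be checked are all routine: $S^{**}$ is densely defined and closed; the pairing identity $\sip{S^{**}x}{y}=\sip{x}{Sy}$ for $x\in\dom S^{**}$, $y\in\dom S$ holds because $S^{**}\subset S^*$ and by the defining property of $S^*$; and the inclusion $\ran\operator{S}{-S}\subset\ran\operator{S^{**}}{-S}$, together with the assumption in (ii), gives $\overline{\ran\operator{S^{**}}{-S}}=\hil\times\hil$. Theorem \ref{T:theorem4} then delivers $S^*=S^{**}$, i.e.\ $S^{**}$ is selfadjoint, which is precisely essential selfadjointness of $S$.

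There is no serious obstacle in this argument; the whole content lies in making the right assignment for $(A,B)$ in Theorem \ref{T:theorem4}. The only point worth a small independent remark is the approximation step in (i) $\Rightarrow$ (ii), where the density of $\ran\operator{S^{**}}{-S^{**}}$ has to be transported to the smaller set $\ran\operator{S}{-S}$ via the graph-norm density of $\dom S$ in $\dom S^{**}$.
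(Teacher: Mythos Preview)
Your proof is correct and follows the paper's approach, which is simply to apply Theorem \ref{T:theorem4} with $A=S^{**}$ and $B=S$; you spell out the graph-norm approximation step that the paper's one-line proof leaves implicit. Your use of $A=B=S^{**}$ for the direction (i)$\Rightarrow$(ii) is a harmless variant of the paper's uniform choice (with the latter, one only needs to approximate in the first slot).
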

\begin{proof}
    Theorem \ref{T:theorem4} can be applied for $A=S^{**}$ and $B=S$.
\end{proof}
\section{Perturbation theorems for closable operators}
This section is devoted to the perturbation theory of linear operators. More precisely, we deal with perturbations $S+T$ of closable, selfadjoint and essentially selfadjoint operators $T$ by
a $T$-bounded operator $S$ and we provide some generalizations of important results due to Hess--Kato \cite{HessKato}, Kato--Rellich \cite{rellich}, and W\"ust \cite{Wüst}.
\begin{theorem}\label{T:theorem10}
    Let $W,Z$ be linear operators in the Hilbert space $\hil$ with $\dom Z\subseteq\dom W$ and so that $Z$ and $W|_{\dom Z}$ are skew-symmetric:
    \begin{align*}
        \sip{Zh}{k}+\sip{h}{Zk}&=0,\\
        \sip{Wh}{k}+\sip{h}{Wk}&=0,
    \end{align*}
    for $h,k\in\dom Z$. Assume furthermore that $W$ and $Z$ fulfill each of the following conditions:
    \begin{enumerate}[\upshape (a)]
      \item $Z$ is closable (or $W|_{\dom Z}$ is closable);
      \item $T:=I+Z$ has dense range;
      \item $\|Wh\|^2\leq\|Th\|^2$ for all $h\in\dom T=\dom Z.$
    \end{enumerate}
    Then $T+W$ has dense range in $\hil$.
\end{theorem}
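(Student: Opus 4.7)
The plan is to show $\overline{\ran(T+W)} = \hil$ by proving that any $v \in \hil$ orthogonal to $\ran(T+W)$ must vanish. The driving observation is that skew-symmetry of $Z$ makes $\sip{Zh}{h}$ purely imaginary (zero in the real case), so
\begin{equation*}
\|Th\|^2 = \|h\|^2 + \|Zh\|^2, \qquad h\in\dom Z,
\end{equation*}
and in particular $\|h\| \leq \|Th\|$. Together with hypothesis (c), this means that on the dense subspace $\ran T$ both $T^{-1}$ and $W\circ T^{-1}$ act contractively.

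Fix $v \perp \ran(T+W)$ and, using (b), pick $h_n \in \dom Z$ with $Th_n \to v$. The two contractive estimates $\|h_n - h_m\|, \|Wh_n - Wh_m\| \leq \|Th_n - Th_m\|$ force both $(h_n)$ and $(Wh_n)$ to be Cauchy; write $h_n \to h$ and $Wh_n \to w$. Passing to the limit in the orthogonality relation $\sip{Th_n + Wh_n}{v} = 0$ gives $\sip{w}{v} = -\|v\|^2$, while $\|w\| \leq \|v\|$ follows from $\|Wh_n\| \leq \|Th_n\|$. The equality case of the Cauchy--Schwarz inequality then pins down $w = -v$, and consequently $(T+W)h_n \to 0$.

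The heart of the argument is now a real-part identity exploiting the skew-symmetry of both $Z$ and $W|_{\dom Z}$: since $\sip{Zh_n}{h_n}$ and $\sip{Wh_n}{h_n}$ are purely imaginary,
\begin{equation*}
\Re\sip{(T+W)h_n}{h_n} = \|h_n\|^2.
\end{equation*}
Because $(T+W)h_n \to 0$ and $(h_n)$ is bounded, the left-hand side tends to $0$, so $h_n \to 0$ and $h = 0$. Closability then concludes the argument: if $Z$ is closable, then $h_n \to 0$ together with $Zh_n = Th_n - h_n \to v$ forces $v = 0$; if instead $W|_{\dom Z}$ is closable, then $h_n \to 0$ together with $Wh_n \to -v$ forces $v = 0$. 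Either way $v = 0$, so $T+W$ has dense range.

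I expect the main obstacle to be the Cauchy--Schwarz equality step that locks in $w = -v$: this is where the precise value of the $T$-bound (namely $1$) is essential, and it is what converts the analytic hypothesis (c) into the algebraic relation $(T+W)h_n \to 0$ needed to trigger the skew-symmetry computation and the final closability reduction.
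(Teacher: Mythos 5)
Your proof is correct and shares the paper's overall architecture---take $v\perp\ran(T+W)$, approximate $v$ by $Th_n$ using (b), show $(T+W)h_n\to 0$ and $h_n\to 0$, and conclude by closability---but the two middle steps are carried out differently. The paper obtains $Wh_n\to -k$ in one stroke from the pointwise inequality $\|k+Wh\|^2\le\|k-Th\|^2$ (expand, use $\sip{k}{(T+W)h}=0$ to trade $\sip{k}{Wh}$ for $-\sip{k}{Th}$, then apply (c)); you instead use (c) to make $(Wh_n)$ Cauchy, pass to a limit $w$, and identify $w=-v$ through the equality case of Cauchy--Schwarz, which is slightly longer but isolates exactly where the value $1$ of the bound in (c) is needed. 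For the step $h_n\to 0$, your real-part identity $\Re\sip{(T+W)h_n}{h_n}=\|h_n\|^2$ (skew-symmetry makes $\sip{Zh}{h}$ and $\sip{Wh}{h}$ purely imaginary, hence zero in the real case) is cleaner than the paper's expansion \eqref{C:claim1} of $\|k-(W+T)h\|^2$, whose extra payoff---the convergence $Zh_n\to k$---you recover anyway from $Zh_n=Th_n-h_n$. A small bonus of your write-up is that both alternatives in hypothesis (a) are treated explicitly ($Z$ closable, or $W|_{\dom Z}$ closable via $Wh_n\to -v$), whereas the paper's closing sentence invokes only the closability of $Z$. The opening framing about $T^{-1}$ and $W\circ T^{-1}$ being contractions on $\ran T$ is harmless (injectivity of $T$ does follow from $\|h\|\le\|Th\|$) but is not actually needed: your argument only uses the two difference estimates $\|h_n-h_m\|\le\|T(h_n-h_m)\|$ and $\|W(h_n-h_m)\|\le\|T(h_n-h_m)\|$.
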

\begin{proof}
    Let $k\in\ran (T+W)^{\perp}$, that is to say,
    \begin{equation*}
        \sip{k}{(T+W)h}=0\qquad \textrm{for all $h\in\dom Z$}.
    \end{equation*}
    By b) we may choose a sequence $\seq{h}$ in $\dom Z$ such that
    \begin{equation*}
        \|k-Th_n\|=\|k-(I+Z)h_n\|\to0,\qquad n\to\infty.
    \end{equation*}
    Here $\|T(h_n-h_m)\|^2=\|Z(h_n-h_m)\|^2+\|h_n-h_m\|^2$ by skew-symmetry, whence we conclude that $\seq{h}$ and $\seq{Zh}$ also converge:
    \begin{equation*}
        h_n\to \widetilde{h},\qquad Zh_n\to z.
    \end{equation*}
    Let us observe that for $h\in\dom Z$
    \begin{align*}
        \|k+Wh\|^2&=\|k\|^2+\|Wh\|^2+\sip{k}{Wh}+\sip{Wh}{k}\\
                  &=\|k\|^2+\|Wh\|^2-\sip{k}{Th}-\sip{Th}{k}\\
                  &\leq\|k\|^2+\|Th\|^2-\sip{k}{Th}-\sip{Th}{k}\\
                  &=\|k-Th\|^2.
    \end{align*}
    Hence if we take $h=h_n$ and let $n\to\infty$ we see that $\|Wh_n+k\|^2\to0$. In particular,
    \begin{equation*}
        (W+T)h_n\to0.
    \end{equation*}
    Next we claim that
    \begin{equation}\label{C:claim1}
        \|k-(W+T)h\|^2=\|k\|^2+\|(Z+W)h\|^2+\|h\|^2,\qquad h\in\dom Z.
    \end{equation}
    Indeed, since $k\in\ran(W+T)^{\perp}$ and by skew-symmetry we have
    \begin{align*}
        \|k-(W+&T)h\|^2=\|k\|^2+\|(I+Z+W)h\|^2\\
                      &=\|k\|^2+\|h\|^2+\|(Z+W)h\|^2+\sip{h}{(Z+W)h}+\sip{(Z+W)h}{h}\\
                      &=\|k\|^2+\|(Z+W)h\|^2+\|h\|^2.
    \end{align*}
    Letting, as before, $h=h_n$ and passing to limit we deduce by \eqref{C:claim1} that
    \begin{equation*}
        \|k\|^2=\limn\|k-(W+T)h_n\|^2=\|k\|^2+\|z-k\|^2+\|\widetilde{h}\|^2,
    \end{equation*}
    so $\widetilde{h}=0$ and $k=z$, that is,
    \begin{equation*}
        h_n\to0, \qquad Zh_n\to k.
    \end{equation*}
    Finally we deduce by the fact that $Z$ is closable that $k=0$. Consequently, $\ran(W+T)^{\perp}=\{0\}$, as required.
\end{proof}
The key to the upcoming perturbation theorems is the next corollary (combined with Theorem \ref{T:theorem4}):
\begin{corollary}\label{C:corollary11}
    Let $A,R$ and $B,S$ be linear operators between the Hilbert spaces $\hil$ and $\kil$, respectively $\kil$ and $\hil$, with $\dom A\subseteq\dom R$ and $\dom B\subseteq\dom S$ so that
    \begin{align*}
        \sip{Ah}{k}=\sip{h}{Bk},\qquad h\in\dom A, k\in\dom B,\\
        \sip{Rh}{k}=\sip{h}{Sk},\qquad h\in\dom R, k\in\dom S.
    \end{align*}
 Assume furthermore that $A,B,R,S$ fulfill each of the following conditions:
    \begin{enumerate}[\upshape (a)]
      \item $A,B$ are closable (or $R|_{\dom A}$ and $S|_{\dom B}$ are closable);
      \item $\operator{A}{-B}$ has dense range in $\hil\times\kil$;
      \item $\|Rh\|^2\leq\|Ah\|^2+\|h\|^2$ for all $h\in\dom A,$
      \item $\|Sk\|^2\leq\|Bk\|^2+\|k\|^2$ for all $h\in\dom B$.
    \end{enumerate}
    Then
    \begin{equation*}
        \overline{\ran\operator{A+R}{-(B+S)}}=\kil\times\hil.
    \end{equation*}
\end{corollary}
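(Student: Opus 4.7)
The plan is to reduce the statement to a direct application of Theorem \ref{T:theorem10} on the product Hilbert space $\kil\times\hil$. Define two operators on $\kil\times\hil$ by
$$Z\pair{k}{h}:=\pair{Ah}{-Bk},\qquad \dom Z:=\dom B\times\dom A,$$
and $W\pair{k}{h}:=\pair{Rh}{-Sk}$ on $\dom W:=\dom S\times\dom R$; since $\dom A\subseteq\dom R$ and $\dom B\subseteq\dom S$ one has $\dom Z\subseteq\dom W$. With these choices, $T:=I+Z$ is nothing but the matrix operator $\operator{A}{-B}$, and $T+W=\operator{A+R}{-(B+S)}$, so the desired equality is precisely the conclusion $\overline{\ran(T+W)}=\kil\times\hil$ delivered by Theorem \ref{T:theorem10}.

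There are four hypotheses of Theorem \ref{T:theorem10} to verify. Skew-symmetry of $Z$ unfolds, for $(k_i,h_i)\in\dom B\times\dom A$, to the identity
$$\sip{Ah_1}{k_2}-\sip{Bk_1}{h_2}+\sip{k_1}{Ah_2}-\sip{h_1}{Bk_2}=0,$$
which follows instantly from $\sip{Ah}{k}=\sip{h}{Bk}$ (applied to $(h_1,k_2)$ and, via conjugation, to $(h_2,k_1)$); the verification for $W|_{\dom Z}$ is identical using $\sip{Rh}{k}=\sip{h}{Sk}$. Closability of $Z$ is componentwise: if $(k_n,h_n)\to 0$ and $Z(k_n,h_n)=(Ah_n,-Bk_n)\to(v,-w)$, then closability of $A$ forces $v=0$ and closability of $B$ forces $w=0$. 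Under the alternative hypothesis in (a), the same componentwise argument shows $W|_{\dom Z}$ is closable, which is the option that Theorem \ref{T:theorem10} explicitly allows. Hypothesis (b) of Theorem \ref{T:theorem10}, that $T$ has dense range, is exactly hypothesis (b) of the corollary.

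The key step is the norm estimate (c) of Theorem \ref{T:theorem10}. Expanding
$$\left\|T\pair{k}{h}\right\|^2=\|k+Ah\|^2+\|h-Bk\|^2$$
and applying $\sip{Ah}{k}=\sip{h}{Bk}$ shows that the cross-terms $2\Re\sip{k}{Ah}$ and $-2\Re\sip{h}{Bk}$ cancel exactly, yielding the clean identity
$$\left\|T\pair{k}{h}\right\|^2=\|k\|^2+\|h\|^2+\|Ah\|^2+\|Bk\|^2.$$
Hypotheses (c) and (d) then give
$$\left\|W\pair{k}{h}\right\|^2=\|Rh\|^2+\|Sk\|^2\leq\|Ah\|^2+\|h\|^2+\|Bk\|^2+\|k\|^2=\left\|T\pair{k}{h}\right\|^2.$$
By Theorem \ref{T:theorem10}, $T+W=\operator{A+R}{-(B+S)}$ has dense range in $\kil\times\hil$, which is the claim. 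The main (but still mild) obstacle is recognizing the correct $Z$ and $W$ on the product space so that the cross-pairing hypotheses $\sip{Ah}{k}=\sip{h}{Bk}$ and $\sip{Rh}{k}=\sip{h}{Sk}$ simultaneously produce skew-symmetry \emph{and} annihilate the cross-terms in $\|T\eta\|^2$, converting the two separate bounds (c), (d) into the single condition (c) of Theorem \ref{T:theorem10}.
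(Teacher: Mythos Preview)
Your proof is correct and follows exactly the approach of the paper, which simply states that one applies Theorem \ref{T:theorem10} with $Z=\begin{pmatrix}0&A\\-B&0\end{pmatrix}$ and $W=\begin{pmatrix}0&R\\-S&0\end{pmatrix}$; you have carefully spelled out the verification of skew-symmetry, closability, and the norm estimate that the paper leaves to the reader.
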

\begin{proof}
The proof is a simple application of Theorem \ref{T:theorem10} for
\begin{equation*}
    Z=\begin{pmatrix}
        0 & A \\
        -B & 0 \\
      \end{pmatrix}
    \qquad \textrm{and } \qquad W=\begin{pmatrix}
        0 & R \\
        -S & 0 \\
      \end{pmatrix}.
\end{equation*}
\end{proof}
The next result is an immediate generalization of the Hess--Kato \cite{HessKato} perturbation theorem:
\begin{theorem}\label{T:theorem12}
    Let $R$ and $B,S$ be linear operators from $\hil$ to $\kil$, and $\kil$ to $\hil$, respectively, satisfying the following conditions:
    \begin{enumerate}[\upshape (a)]
      \item $\dom B^*\subseteq\dom R$ and $\dom B\subseteq\dom S$;
      \item $B$ is densely defined and closable;
      \item $B^*+R$ is closed;
      \item $\sip{Rh}{k}=\sip{h}{Sk}$ for all $h\in\dom B^*$ and $k\in\dom B$;
      \item $\|Rh\|^2\leq\|B^*h\|^2+\|h\|^2$ for $h\in\dom B^*$;
      \item $\|Sk\|^2\leq\|Bk\|^2+\|k\|^2$ for $k\in\dom B$.
    \end{enumerate}
    Then $B+S$ is (densely defined and) closable such that
    \begin{equation*}
        (B+S)^*=B^*+R
    \end{equation*}
\end{theorem}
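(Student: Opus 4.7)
The plan is to reduce Theorem \ref{T:theorem12} directly to Corollary \ref{C:corollary11} combined with Theorem \ref{T:theorem4}, by setting $A := B^{*}$. Specifically, I will first feed the quadruple $(B^{*}, B, R, S)$ into Corollary \ref{C:corollary11} to obtain density of the range of the matrix operator $\operator{B^{*}+R}{-(B+S)}$, and then use Theorem \ref{T:theorem4} in the direction (ii)$\Rightarrow$(i) with the pair $(B^{*}+R, B+S)$ to conclude the desired adjoint identity.

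First I would verify the hypotheses of Corollary \ref{C:corollary11} for $A=B^{*}$. The domain inclusions $\dom B^{*}\subseteq \dom R$ and $\dom B\subseteq \dom S$ are exactly (a) of the theorem. The pairing $\sip{B^{*}h}{k}=\sip{h}{Bk}$ is just the definition of the adjoint, while $\sip{Rh}{k}=\sip{h}{Sk}$ is (d). The closability requirement (a) of the corollary is clear: $B$ is closable by (b), and $B^{*}$ is automatically closed, hence closable. The growth bounds (c) and (d) of the corollary are (e) and (f). The only non-immediate hypothesis is (b) of the corollary, namely that $\operator{B^{*}}{-B}$ has dense range; but this is precisely the content of Theorem \ref{T:theorem4} (i)$\Rightarrow$(ii) applied to the closable operator $B$ (whose adjoint is $B^{*}$, which is densely defined and closed thanks to the closability of $B$).

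With Corollary \ref{C:corollary11} in hand, I obtain
\begin{equation*}
\overline{\ran \operator{B^{*}+R}{-(B+S)}}=\kil\times\hil.
\end{equation*}
Now I set up Theorem \ref{T:theorem4} in the direction (ii)$\Rightarrow$(i) with $A$ replaced by $B^{*}+R$ and $B$ replaced by $B+S$. The operator $B^{*}+R$ is densely defined (its domain equals $\dom B^{*}\cap\dom R=\dom B^{*}$ by (a), and $\dom B^{*}$ is dense since $B$ is closable) and closed by hypothesis (c). The required pairing
\begin{equation*}
\sip{(B^{*}+R)h}{k}=\sip{h}{(B+S)k},\qquad h\in\dom B^{*},\,k\in\dom B,
\end{equation*}
splits into the adjoint relation $\sip{B^{*}h}{k}=\sip{h}{Bk}$ and into (d). Theorem \ref{T:theorem4} then delivers that $B+S$ is densely defined and closable with $(B+S)^{*}=B^{*}+R$, which is the claim.

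I do not expect a genuine obstacle here: the argument is essentially a matching of hypotheses. The only point that requires a moment of care is that the matrix-range condition needed to invoke Theorem \ref{T:theorem4} for the perturbed pair is supplied by Corollary \ref{C:corollary11} rather than by a direct computation, and one must check that the domain of the perturbed adjoint candidate $B^{*}+R$ coincides with $\dom B^{*}$ so that closedness and the pairing identity are both phrased on the correct domain.
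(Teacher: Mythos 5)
Your proposal is correct and follows essentially the same route as the paper: verify the hypotheses of Corollary \ref{C:corollary11} for the quadruple $(B^{*},B,R,S)$ (with the density of $\ran\operator{B^{*}}{-B}$ coming from Theorem \ref{T:theorem4} applied to the closable operator $B$), and then apply Theorem \ref{T:theorem4} (ii)$\Rightarrow$(i) to the pair $(B^{*}+R,\,B+S)$. Your write-up is in fact slightly more explicit than the paper's about where the unperturbed range density and the domain identification $\dom(B^{*}+R)=\dom B^{*}$ come from, but the argument is the same.
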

\begin{proof}
    Since $B$ is densely defined it follows that
    \begin{equation*}
        \overline{\ran \operator{B^*}{-B}} =\kil\times \hil.
    \end{equation*}
    Moreover, $B^*+R$ is densely defined and closed by assumption, such that
    \begin{equation*}
        \sip{(B^*+R)h}{k}=\sip{h}{(B+S)k},\qquad h\in\dom B^*, k\in\dom B.
    \end{equation*}
    Consequently, due to the previous Corollary, $B^*+R$ and $B+S$ fulfill the conditions of Theorem \ref{T:theorem4}: $B+S$ is closable and $(B+S)^*=B^*+R$.
\end{proof}
\begin{remark}\label{R:remark13}
    A simple condition which guarantees the closedness of $B^*+R$ is that $R$ is assumed to be $B^*$-bounded, by $B^*$-bound less than 1, see \cite[\S 3, Theorem 4.2]{Birman}.
\end{remark}
\begin{corollary}\label{C:corollary14}
    Let $B,S$ be  linear operators from $\hil$ to $\kil$ satisfying the following conditions:
    \begin{enumerate}[\upshape (a)]
      \item $\dom B\subseteq\dom S$ and $\dom B^*\subseteq\dom S^*$;
      \item $B$ is densely defined and closable;
      \item $B^*+S^*$ is closed;
      \item $\|Sh\|^2\leq\|Bh\|^2+\|h\|^2$ for $h\in\dom B$;
      \item $\|S^*k\|^2\leq\|B^*k\|^2+\|k\|^2$ for $k\in\dom B^*$;
    \end{enumerate}
    Then $B+S$ is (densely defined and) closable such that
    \begin{equation*}
        (B+S)^*=B^*+S^*.
    \end{equation*}
\end{corollary}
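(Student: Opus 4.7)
The statement is essentially a specialization of Theorem \ref{T:theorem12}, so my plan is to apply that theorem with the substitution $R := S^{*}$ and verify the hypotheses one by one.

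First I would note the implicit observation that condition (e) of Corollary \ref{C:corollary14} only makes sense when $S$ is densely defined, so $S^{*}$ exists as a (possibly non-densely defined) operator from $\kil$ to $\hil$. The direction $\kil\to\hil$ for $S^{*}$ matches the direction required of $R$ in Theorem \ref{T:theorem12} (after the trivial relabelling $\hil\leftrightarrow\kil$, since Corollary \ref{C:corollary14} has $B,S$ from $\hil$ to $\kil$).

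Next I would verify the six hypotheses of Theorem \ref{T:theorem12} with $R := S^{*}$. Conditions (a), (b), (c) of the theorem translate directly into conditions (a), (b), (c) of the corollary, with $B^{*}+R = B^{*}+S^{*}$ in (c). The norm inequalities (e) and (f) of the theorem become exactly (e) and (d) of the corollary, respectively. The only hypothesis that requires a remark is (d) of Theorem \ref{T:theorem12}, namely
\begin{equation*}
    \sip{S^{*}h}{k}=\sip{h}{Sk},\qquad h\in\dom B^{*}\subseteq\dom S^{*},\ k\in\dom B\subseteq\dom S;
\end{equation*}
but this is just the definition of the adjoint, so it holds automatically as soon as the domain inclusions in (a) are granted.

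Applying Theorem \ref{T:theorem12} then yields that $B+S$ is densely defined and closable with $(B+S)^{*}=B^{*}+R=B^{*}+S^{*}$, which is the conclusion. There is essentially no obstacle: the only ``step'' worth highlighting is the observation that taking $R=S^{*}$ makes the compatibility condition (d) of Theorem \ref{T:theorem12} free of charge, reducing Corollary \ref{C:corollary14} to pure bookkeeping of the remaining hypotheses.
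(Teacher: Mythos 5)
Your proposal is correct and is exactly the paper's argument: the paper proves Corollary \ref{C:corollary14} by invoking Theorem \ref{T:theorem12} with $R:=S^{*}$, and your verification of the hypotheses (including the observation that condition (d) of the theorem is automatic from the definition of the adjoint, and the relabelling of $\hil$ and $\kil$) is just the bookkeeping the paper leaves implicit.
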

\begin{proof}
    Use Theorem \ref{T:theorem12} for $R:=S^*$.
\end{proof}
The following proposition extends a classical result due to Hess--Kato \cite{HessKato}; cf. also \cite[\S 3, Theorem 4.3]{Birman}:
\begin{proposition}
    Let $S$ and $T$ be densely defined linear operators between the Hilbert spaces $\hil$ and $\kil$ satisfying
   \begin{enumerate}[\upshape (a)]
     \item $T$ is closable;
     \item $\dom T\subseteq\dom S$ and $\dom T^*\subseteq\dom S^*$;
     \item $\|Sh\|^2\leq \|Th\|^2+\|h\|^2$ for all $h\in\dom T$;
     \item $\|S^*k\|^2\leq q\|T^*k\|^2+\|k\|^2$ for all $k\in\dom T^*$ and for some $0\leq q<1$.
   \end{enumerate}
   Then $B+S$ is closable and $(B+S)^*=B^*+S^*$.
\end{proposition}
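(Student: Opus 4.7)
The plan is to reduce the proposition to Corollary \ref{C:corollary14} by playing the role of $B$ there with $T$. Hypotheses (a) and (b) of the proposition match conditions (a) and (b) of Corollary \ref{C:corollary14} word for word, and condition (d) of the corollary is exactly hypothesis (c) here. So, modulo checking the remaining two items of Corollary \ref{C:corollary14}, the conclusion that $T+S$ is closable and $(T+S)^{*}=T^{*}+S^{*}$ will follow immediately.

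First I would dispatch condition (e) of Corollary \ref{C:corollary14}. Since $0\leq q<1$, we have $q\|T^{*}k\|^{2}\leq \|T^{*}k\|^{2}$ for every $k\in\dom T^{*}$, so hypothesis (d) of the proposition yields
\begin{equation*}
    \|S^{*}k\|^{2}\leq q\|T^{*}k\|^{2}+\|k\|^{2}\leq \|T^{*}k\|^{2}+\|k\|^{2},\qquad k\in\dom T^{*},
\end{equation*}
which is exactly the inequality asked for in (e).

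The main remaining issue is to establish condition (c) of Corollary \ref{C:corollary14}, namely that $T^{*}+S^{*}$ is closed. This is where the strict inequality $q<1$ is essential. Rewriting hypothesis (d) of the proposition as $\|S^{*}k\|^{2}\leq q\|T^{*}k\|^{2}+\|k\|^{2}$ shows that $S^{*}$ is $T^{*}$-bounded with $T^{*}$-bound at most $\sqrt{q}<1$. Invoking Remark \ref{R:remark13} (i.e., the Kato--Rellich closedness result from \cite[\S 3, Theorem 4.2]{Birman}), we conclude that $T^{*}+S^{*}$ is closed on $\dom T^{*}\subseteq \dom S^{*}$. With all conditions of Corollary \ref{C:corollary14} verified, its conclusion gives the desired closability of $T+S$ together with the adjoint identity $(T+S)^{*}=T^{*}+S^{*}$.

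The only genuine obstacle is the closedness step: without $q<1$ one cannot appeal to Kato--Rellich, and Corollary \ref{C:corollary14} has no mechanism to derive closedness from its other hypotheses. Everything else is a straightforward matching of hypotheses.
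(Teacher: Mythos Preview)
Your proposal is correct and follows exactly the paper's approach: the paper's proof consists of the single line ``Use Corollary \ref{C:corollary14} and Remark \ref{R:remark13},'' and you have spelled out precisely that reduction, verifying the hypotheses of Corollary \ref{C:corollary14} (with $B:=T$) and invoking Remark \ref{R:remark13} to obtain the closedness of $T^{*}+S^{*}$ from the $T^{*}$-bound $\sqrt{q}<1$.
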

\begin{proof}
    Use Corollary \ref{C:corollary14} and Remark  \ref{R:remark13}.
\end{proof}
The next corollary is a generalized W\"ust perturbation theorem (cf. \cite{Wüst})
for essentially selfadjoint operators in real or complex Hilbert spaces (cf. \cite{Weidmann}[\S 4.1, Theorem 4.6]; to see also [9]).
\begin{corollary}\label{C:corollary18}
    Let $S,T$ be linear operators acting on a Hilbert space $\hil.$ Assume that
    \begin{enumerate}[\upshape (a)]
     \item $S$ is essentially selfadjoint with $\dom S\subseteq\dom T$;
     \item $\|Th\|^2\leq \|Sh\|^2+\|h\|^2$ for all $h\in\dom S$;
     \item $\sip{Th}{k}=\sip{h}{Tk}$ for all $h,k\in\dom S$ (that is, the restriction of $T$ to $\dom S$ is symmetric).
   \end{enumerate}
   Then $S+T$ is essentially selfadjoint.
\end{corollary}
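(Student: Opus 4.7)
The plan is to combine Corollary \ref{C:corollary6} (the characterization of essential selfadjointness via the dense range of $\operator{S}{-S}$) with Corollary \ref{C:corollary11} applied to the symmetric quadruple $A=B=S$ and $R=\widetilde{S}=T$ acting on $\kil=\hil$.

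First I would verify that $S+T$ is densely defined and symmetric. Since $\dom S\subseteq\dom T$, we have $\dom(S+T)=\dom S$, which is dense because $S$ is essentially selfadjoint. Symmetry follows from adding the symmetry relation for $S$ (which is part of being essentially selfadjoint) and the symmetry of $T|_{\dom S}$ given in~(c). So by Corollary \ref{C:corollary6} it suffices to prove
\begin{equation*}
    \overline{\ran\operator{S+T}{-(S+T)}}=\hil\times\hil.
\end{equation*}

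Next, I would check that the hypotheses of Corollary \ref{C:corollary11} are satisfied with $A=B=S$ and $R=\widetilde{S}=T$. Condition (a) is immediate since $S$, being essentially selfadjoint, is closable. Condition (b), that $\operator{S}{-S}$ has dense range in $\hil\times\hil$, is precisely the nontrivial implication of Corollary \ref{C:corollary6} applied to $S$. The two adjoint-type relations between $(A,B)$ and between $(R,\widetilde{S})$ reduce to the symmetry of $S$ and of $T|_{\dom S}$, respectively. Finally, conditions (c) and (d) are the single inequality $\|Th\|^2\le\|Sh\|^2+\|h\|^2$ applied twice (note that the hypothesis only needs to hold on $\dom S$, which is exactly where we use it).

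Invoking Corollary \ref{C:corollary11} then yields the required range condition for $\operator{S+T}{-(S+T)}$, and a second application of Corollary \ref{C:corollary6} concludes that $S+T$ is essentially selfadjoint. There is no real obstacle here: the only thing to be slightly careful about is setting up the correct identification so that Corollary \ref{C:corollary11} (stated between two possibly distinct Hilbert spaces) applies with $\kil=\hil$ and with one and the same operator in both ``diagonal'' slots.
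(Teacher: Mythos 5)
Your proposal is correct and follows essentially the same route as the paper: the authors also take $A=B=S$ and both perturbation slots equal to $T$ in Corollary \ref{C:corollary11}, obtain the dense range of $\operator{S}{-S}$ from Corollary \ref{C:corollary6}, and then apply Corollary \ref{C:corollary6} again to the densely defined symmetric operator $S+T$. Your remark that the symmetry of $T$ and the norm inequality are only needed on $\dom S$ matches the way the paper uses these hypotheses as well.
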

\begin{proof}
    We show that $A:=B:=S$ and $R:=S:=T$ fulfill each of the conditions (a)-(d) of  Corollary \ref{C:corollary11}.
    Indeed, $\dom S\subseteq\dom T$, and, being $S,T$ symmetric,
    \begin{equation*}
        \sip{Sh}{k}=\sip{h}{Sk}\qquad \sip{Th}{k}=\sip{h}{Tk},\qquad h,k\in\dom S.
    \end{equation*}
    Furthermore, $S$ is essentially selfadjoint, thus $\overline{\ran \operator{S}{-S}}=\hil\times\hil$ by  Corollary \ref{C:corollary6}. Finally,
    $\|Th\|^2\leq \|Sh\|^2+\|h\|^2$ for all $h\in\dom T$, by assumption. Hence
    \begin{equation*}
        \ran\operator{S+T}{-(S+T)}=\hil\times\hil.
    \end{equation*}
    Since $S+T$ is symmetric and densely defined, Corollary \ref{C:corollary6} applies.
\end{proof}
An immediate consequence of the previous result is the following
\begin{corollary}\label{C:corollary17}
    Let $S,T$ be linear operators acting on a Hilbert space $\hil.$ Assume that
    \begin{enumerate}[\upshape (a)]
     \item $S$ is essentially selfadjoint with $\dom S\subseteq\dom T$;
     \item $\|Th\|^2\leq \|Sh\|^2+\|h\|^2$ for all $h\in\dom S$;
     \item $\sip{Th}{k}=\sip{h}{Tk}$ for all $h,k\in\dom S$;
     \item $S+T$ is closed.
   \end{enumerate}
   Then $S+T$ is selfadjoint.
\end{corollary}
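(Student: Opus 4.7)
The plan is simply to combine the preceding Corollary \ref{C:corollary18} with the extra closedness hypothesis (d). Hypotheses (a), (b), and (c) of the statement are exactly the hypotheses (a), (b), (c) of Corollary \ref{C:corollary18}, so that corollary applies and yields that $S+T$ is essentially selfadjoint.

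By the very definition of essential selfadjointness, the closure $\overline{S+T} = (S+T)^{**}$ is a selfadjoint operator. Since $S+T$ is densely defined (because $\dom S$ is dense in $\hil$ and $\dom(S+T) = \dom S \cap \dom T = \dom S$) and symmetric (being the sum on $\dom S$ of two symmetric operators by (c) and essential selfadjointness of $S$), hypothesis (d) tells us that $S+T$ is closed, hence $S+T = \overline{S+T}$. Consequently $S+T$ itself is selfadjoint.

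There is no real obstacle here: once essential selfadjointness is obtained from Corollary \ref{C:corollary18}, the added closedness from (d) collapses the passage to the closure and promotes essential selfadjointness directly to selfadjointness. The only point that merits an explicit mention is that $\dom(S+T) = \dom S$, so that density of $\dom S$ guarantees $S+T$ is densely defined, which is needed in order to speak of $(S+T)^{*}$ at all.
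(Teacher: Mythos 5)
Your proof is correct and follows exactly the route the paper intends: the paper states this corollary as an immediate consequence of Corollary \ref{C:corollary18}, namely that those hypotheses give essential selfadjointness of $S+T$, and the extra closedness assumption (d) then forces $S+T=\overline{S+T}$ to be selfadjoint. Your explicit remarks about $\dom(S+T)=\dom S$ and symmetry are a harmless (and welcome) elaboration of the same argument.
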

We close our paper with a generalized version of the classical Kato--Rellich \cite{rellich} perturbation theorem; cf. also \cite[\S 5, Theorem 5.28]{Weidmann} and \cite{Characterization} :
\begin{corollary}\label{C:corollary19}
    Let $S,T$ be linear operators acting on a Hilbert space $\hil.$ Assume that
    \begin{enumerate}[\upshape (a)]
     \item $S$ is selfadjoint with $\dom S\subseteq\dom T$;
     \item $\|Th\|^2\leq q\|Sh\|^2+\|h\|^2$ for all $h\in\dom S$ with some $0\leq q<1$;
     \item $\sip{Th}{k}=\sip{h}{Tk}$ for all $h,k\in\dom S$.
   \end{enumerate}
   Then $S+T$ is selfadjoint.
\end{corollary}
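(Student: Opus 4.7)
My plan is to deduce the statement from Corollary \ref{C:corollary17}, which already delivers selfadjointness once one has essential selfadjointness of $S$, symmetry of $T$ on $\dom S$, the bound $\|Th\|^2\le\|Sh\|^2+\|h\|^2$, and closedness of $S+T$. Hypotheses (a), (b), (c) of Corollary \ref{C:corollary17} follow at once from the present assumptions: a selfadjoint $S$ is in particular essentially selfadjoint with $\dom S\subseteq\dom T$; a bound with factor $q<1$ is \emph{a fortiori} a bound with factor $1$; and the symmetry condition carries over verbatim. Everything therefore reduces to verifying that $S+T$, with natural domain $\dom S$, is closed.

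For the closedness I would run the standard Kato--Rellich argument, in which the strict inequality $q<1$ is used in an essential way. Using $\sqrt{a+b}\le\sqrt{a}+\sqrt{b}$ on hypothesis (b) yields
\begin{equation*}
\|Th\|\le\sqrt{q}\,\|Sh\|+\|h\|, \qquad h\in\dom S.
\end{equation*}
Given a sequence $(h_n)\subseteq\dom S$ with $h_n\to h$ and $(S+T)h_n\to g$ in $\hil$, applying this bound to $h_n-h_m$ together with the triangle inequality $\|S(h_n-h_m)\|\le\|(S+T)(h_n-h_m)\|+\|T(h_n-h_m)\|$ produces
\begin{equation*}
(1-\sqrt{q})\,\|S(h_n-h_m)\|\le\|(S+T)(h_n-h_m)\|+\|h_n-h_m\|,
\end{equation*}
whose right-hand side tends to zero. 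Hence $(Sh_n)$ is Cauchy; since $S$ is selfadjoint and therefore closed, $h\in\dom S$ and $Sh_n\to Sh$. Because $\dom S\subseteq\dom T$ we also have $h\in\dom T$, and applying the $S$-bound to $h-h_n$ forces $Th_n\to Th$. Consequently $(S+T)h=g$, proving that $S+T$ is closed.

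With all the hypotheses of Corollary \ref{C:corollary17} now in place, that corollary yields selfadjointness of $S+T$. The only step with genuine content is the closedness argument just sketched, and its sole subtlety is that the factor $1-\sqrt{q}$ must be strictly positive; this is precisely where the hypothesis $q<1$, which is absent from the W\"ust-type Corollary \ref{C:corollary18}, enters the proof.
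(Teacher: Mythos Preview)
Your proposal is correct and follows exactly the paper's approach: apply Corollary~\ref{C:corollary17} and verify the closedness of $S+T$ via the $S$-bound with constant $q<1$. The only difference is cosmetic---the paper dispatches the closedness by invoking Remark~\ref{R:remark13} (i.e., the standard fact from \cite{Birman}), whereas you spell out the Kato--Rellich closedness argument in full.
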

\begin{proof}
    Apply Corollary \ref{C:corollary17} and Remark \ref{R:remark13}.
\end{proof}
\bibliographystyle{abbrv}

\end{document}